\newtheorem{theoremb}{Theorem}
\newtheorem{theoremc}{Theorem\!\!}
\newtheorem{theoremd}{Theorem}
\newtheorem{theoremf}{Theorem}
\newtheorem{rk}[theoremf]{Remark}
\newtheorem{lem}[theoremd]{Lemma}
\newtheorem{prop}[theoremb]{Proposition}
\newcommand\bib[1]{\bibitem[#1]{#1}}
\newcommand\bond[1]{\draw (#1) -- +(1,0)}
\newcommand\tcirc[3]{
	\ifthenelse{\equal{#1}{w}}{\filldraw[fill=white,draw=black] (#2) circle (0.08);}{}%
	\ifthenelse{\equal{#1}{b}}{\filldraw[black] (#2) circle (0.08);}{}%
	\draw (#2) node[above=2pt] {#3};
	}
\newcommand\tcross[2]{
	\draw (#1) node[above=2pt] {#2};
	\draw (#1) ++(-0.12,-0.12)-- +(0.24, 0.24);
	\draw (#1) ++(-0.12, 0.12)-- +(0.24,-0.24);
	}
\newcommand\tstar[2]{
	\draw[color=red] (#1) node {\Large$*$};
	\draw (#1) node[above=2pt] {#2};
	}
\newcommand\tsquare[2]{
		\draw[semithick,color=blue] (#1) ++(-0.15,-0.15) rectangle ++(0.3,0.3);
		\tcross{#1}{#2};
		}
\newcommand\DDnode[3]{
\ifthenelse{\equal{#1}{w}}{\tcirc{w}{#2}{#3}}{}		% white - non-compact root (Satake diagram)
\ifthenelse{\equal{#1}{b}}{\tcirc{b}{#2}{#3}}{}		% black - compact root (Satake diagram)
\ifthenelse{\equal{#1}{x}}{\tcross{#2}{#3}}{}		% crossed root (corresponding to parabolic)
\ifthenelse{\equal{#1}{s}}{\tstar{#2}{#3}}{}		% starred root (my notation for sub-parabolic)
\ifthenelse{\equal{#1}{q}}{\tsquare{#2}{#3}}{}		% crossed square (Iw root)
}
\newcommand\com[1]{}
\newcommand\C{{\mathbb C}}
\newcommand\Cc{{\let\mathcal\mathscr\mathcal C}}
\newcommand\fa{\mathfrak{a}}
 \newcommand\fS{\mathfrak{S}}
 \newcommand\fU{\mathfrak{U}}
\newcommand\g{{\frak g}}
\newcommand\op[1]{\mathop{\rm #1}\nolimits}
\newcommand\ot{\otimes}
\newcommand\p{\partial}
\newcommand\R{{\mathbb R}}
\begin{document}

 \title{Submaximally symmetric CR-structures}
 \author{Boris Kruglikov}
 \date{}
 \address{Department of Mathematics and Statistics, University of Troms\o, Troms\o\ 90-37, Norway.
\quad E-mail: {\tt boris.kruglikov@uit.no}}
\keywords{Cauchy-Riemann structure, automorphism, symmetry, CR-curvature, submaximal symmetry dimension.}
\subjclass[2010]{32V20, 32M25, 57S20, 58J70}

 \begin{abstract}
Hypersurface type CR-structures with non-degenerate Levi form on a manifold of dimension $(2n+1)$ 
have maximal symmetry dimension $n^2+4n+3$. We prove that the next (submaximal) possible dimension
for a (local) symmetry algebra is $n^2+4$ for Levi-indefinite structures and $n^2+3$ for Levi-definite structures 
when $n>1$. In the exceptional case of CR-dimension $n=1$, the submaximal symmetry dimension 3 was computed by E.\,Cartan.
 \end{abstract}

 \maketitle

%%%%%%%%%%%%%%%%%%%%%%%%%%%%%%%%%%%%%%%%%%%%%%%%%%%%%%%%%%%%%%%%%%%%%%%%%%%%
%0%
\section*{Introduction and Main Result}

Levi-nondegenerate Cauchy-Riemann (CR) structure of hypersurface type on a manifold $M$ of $\dim=2n+1$ consists of 
a contact distribution $C\subset TM$ and a complex structure $J$ on it that preserves the canonical conformal
symplectic structure $\omega$ on $C$ associated to the natural tensor $\bar\omega\in\Lambda^2C^*\otimes(TM/C)$,
$(X,Y)\mapsto[X,Y]\,\op{mod}C$: $\omega(JX,JY)=\omega(X,Y)$.

We will assume $J$ integrable, which is a condition analogous to vanishing 
of the Nijenhuis tensor: $[JX,JY]-[X,Y]=J([JX,Y]+[X,JY])$, $X,Y\in\mathcal{D}(M)$.
In this case under additional assumption of analyticity the CR-structure can be locally realized by
a real hypersurface in $\C^{n+1}$. 
Moreover the internal symmetries of CR-structures (vector fields
preserving both $C$ and $J$) are bijective with external symmetries (holomorphic vector fields
tangent to the hypersurface). We refer to these statements and other details to \cite{BER}.

Levi form of CR-structure $(C,J)$ is the conformal quadric $q(X,Y)=\omega(X,JY)$. It is non-degenerate 
(this is equivalent to nondegeneracy of $\bar\omega$, i.e.\ to $C$ being contact), and so has signature
$(2k,2n-2k)$ ($J$ is $q$-orthogonal; we fix $k\le\frac{n}2$ using a choice of the conformal factor). 
In the case $k=0$ we say $q$ is a Levi-definite form, elsewise $Q$ is Levi-indefinite.

E.\,Cartan \cite{Ca} for $n=1$ and N.\,Tanaka \cite{T} for $n>1$ showed (in the analytic case) that the symmetry (automorphism) 
group of a CR-structure of the considered type has maximal dimension $(n+2)^2-1$. The same estimate applies to
the Lie algebra of (local) symmetries. This maximal value is achieved only if the structure 
has CR-curvature $\kappa$ zero. In this and only in this case $(M,C,J)$ is locally equivalent to the round sphere 
$S^{2n+1}\subset\C^{n+1}$ with the induced CR-structure, or to another nondegenerate quadric (if the Levi form has indefinite signature). 
This seminal result is based on the construction of Cartan connection,
and it was re-proven by Chern-Moser (in the smooth case) in their investigation of CR-invariants \cite{CM}, and also in the 
general framework of parabolic geometry \cite{CS}.

What happens when $\kappa\neq0$ at some point of $M$? Here and below by
CR-curvature $\kappa$ one can understand either the Chern-Moser curvature or Tanaka's curvature or
the curvature of normal Cartan connection of the corresponding parabolic geometry, as well as the harmonic part of thereof.
The condition $\kappa\not\equiv0$ is independent of the choice. 

With this condition the maximal dimension of the symmetry algebra is called the sub-maximal symmetry dimension
(it was studied for many geometric structures, see \cite{Ko} and references therein).
Notice that dimension of the symmetry algebra is no less than that of the symmetry group\footnote{An example of difference: 
The automorphism group of flat $S^{2n+1}$ without a point is the parabolic subgroup 
$P_{1,n+1}\subset SU(1,n+1)$ of dimension $n^2+2n+2$, while the symmetry algebra is $\mathfrak{su}(1,n+1)$ 
of dimension $n^2+4n+3$.}, so the bound on the symmetry algebra is a stronger result.

For $n=1$ the sub-maximal symmetry dimension $\fS=3$ was established by E.\,Cartan (re-proved in \cite{KT} in the smooth case). 
For $n=2$ the answer comes from the results of A.\,Loboda \cite{L1,L2}: for Levi-definite case $\fS=7$ and
for Levi-indefinite case $\fS=8$ (actually he bounded the automorphism group, and assumed analyticity, but we 
will relax these assumptions without altering the values). For general $n$ (again under analyticity assumption)
the bound for the stability group $\op{Aut}_0(M,C,J)$ was achieved by V.\,Ezhov and A.\,Isaev \cite{EI}
(see also \cite{I}), but their theorem does not imply the submaximal value of the symmetry dimension.

 \begin{theoremc}
Consider a connected smooth Levi-nondegenerate hypersurface type CR-structure $(M,C,J)$ of CR-dimension $n>1$. 
If at one point the CR-curvature $\kappa\neq0$, then the symmetry algebra has dimension at most $\fS=n^2+3$ in the Levi-definite case and 
$\fS=n^2+4$ in the Levi-indefinite case. This estimate is sharp (= realizable).
 \end{theoremc}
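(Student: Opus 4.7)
The plan is to recast the problem in the parabolic-geometry framework and apply the general Kruglikov--The method for submaximal symmetry dimensions. First I would invoke the classical equivalence between regular normal integrable Levi-nondegenerate CR-structures of hypersurface type with Levi signature $(2p,2q)$ and regular normal parabolic geometries of type $(G,P)$, where $\fg=\mathfrak{su}(p+1,q+1)$ (with $p+q=n$) carries the $|2|$-grading $\fg=\fg_{-2}\oplus\fg_{-1}\oplus\fg_0\oplus\fg_1\oplus\fg_2$ determined by the parabolic $\fp$ stabilizing an isotropic complex line, so that $\fg_0\cong\mathfrak{u}(p,q)\oplus\R$ is reductive of real dimension $n^2+1$ with $\dim\fg_-=\dim\fp_+=2n+1$. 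By Kostant's version of the Bott--Borel--Weil theorem, the harmonic part $\kappa_H$ of the Cartan curvature takes values in a completely reducible $\fg_0$-module $\mathbb{V}\subset H^2(\fg_-,\fg)$, and the hypothesis $\kappa\not\equiv0$ forces $\kappa_H\not\equiv0$.

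Second, for the upper bound I would pick $x_0\in M$ with $\kappa_H(x_0)\neq0$ and decompose $\kappa_H(x_0)=\sum_\phi\kappa_\phi(x_0)$ according to the $\fg_0$-irreducible summands $\mathbb{V}_\phi$. Any local symmetry fixing $x_0$ preserves $\kappa_H(x_0)$, so its linear isotropy lies in the annihilator $\fa_\phi\subset\fg_0$ of some nonzero component $\kappa_\phi(x_0)$, and Tanaka prolongation inside $\fg$ yields the algebraic upper bound
\[
 \fS\leq\fU:=\dim\fg_-+\max_\phi\dim\bigl(\fa_\phi\oplus\fp_+^{\fa_\phi}\bigr),
\]
where $\fp_+^{\fa_\phi}\subset\fp_+$ denotes the part surviving the Tanaka prolongation of $\fg_-\oplus\fa_\phi$. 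The main task is then to maximize this expression over nonzero $\phi\in\mathbb{V}$: a concrete representation-theoretic problem requiring the largest subalgebra of $\fg_0$ stabilizing a nonzero element in each harmonic component, together with control of how far it extends into $\fp_+$. In the Levi-definite case $p=0$ the semisimple part of $\fg_0$ is compact ($\mathfrak{su}(n)$), which rigidifies orbit stabilizers; in the Levi-indefinite case $p\geq1$ the non-compactness of $\mathfrak{u}(p,q)$ is expected to allow one additional ``null'' stabilizer direction, producing the gap $n^2+4$ versus $n^2+3$.

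Sharpness I would establish by exhibiting explicit real hypersurfaces in $\C^{n+1}$ attaining each bound. Natural candidates are quasi-homogeneous perturbations of the model quadric carrying one surviving scaling symmetry beyond the generic ones, and in the Levi-indefinite case tube-type hypersurfaces over an affinely homogeneous cone in $\R^{n+1}$, extending Loboda's $n=2$ models. The main obstacle is the stabilizer optimization in the second step: proving that no nonzero vector in any $\mathbb{V}_\phi$ admits a larger annihilator than the one exhibited, and tracking precisely which elements of $\fp_+$ survive prolongation. A secondary difficulty is matching the algebraic bound with a geometric model --- the maximizing $\fg_0$-orbit in $\mathbb{V}$ must be shown to support a genuine curved regular normal Cartan connection, which the explicit models accomplish.
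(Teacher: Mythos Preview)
Your strategy is the paper's strategy: cast the problem as a parabolic geometry of type $A_{n+1}/P_{1,n+1}$, invoke the Kruglikov--The universal bound $\fU$, argue that compactness of the semisimple part $\mathfrak{su}(n)\subset\fg_0$ forces the annihilator dimension down by one in the Levi-definite case, and exhibit explicit hypersurfaces for sharpness.

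Two points of execution in the paper differ from your sketch and are worth knowing. First, your concern about ``tracking which elements of $\fp_+$ survive prolongation'' is unnecessary: prolongation rigidity for $A_{n+1}/P_{1,n+1}$ is already established in \cite{KT} (and passes to real forms, as the paper notes in an appendix), so $\fp_+^{\fa_\phi}=0$ and the bound collapses to $\dim\fg_- + \max_\phi\dim\fa_\phi$. Second, the sharp models are not tubes over cones. In the indefinite case the paper uses the quartic perturbation
\[
\op{Im}(z_{n+1})=\op{Im}(z_1\bar z_2)+|z_1|^4+\textstyle\sum_{k=3}^n\epsilon_k|z_k|^2,
\]
and in the definite case the logarithmic model
\[
\op{Im}(z_{n+1})=\log(1+|z_1|^2)+\textstyle\sum_{k=2}^n|z_k|^2,
\]
in each case writing out the holomorphic symmetry fields explicitly and counting. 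The definite-case drop is made precise by observing that the lowest weight vector of the relevant $\fg_0^\C$-module is not real when $(\fg_0)_{\text{ss}}\cong\mathfrak{su}(n)$ is compact (alternatively, via the Isaev--Krantz classification of subalgebras of $\mathfrak{u}(n)$ of dimension $\ge n^2-2n+3$), which is exactly the ``rigidifies orbit stabilizers'' heuristic you stated.
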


% Notice that as a by-product of the computation we also obtain the bound $\dim\op{Aut}_0(M,C,J)\leq\fS-2n-1$,
% which rules out the first alternative in Theorems 1.1 and 1.2 of \cite{EI} at a regular point 
% (their result holds for all points, but if the above bound fails the point is umbilic).

Here by symmetries one can understand either 
(the germs of) local vector fields defined in a neighborhood of the point with $\kappa\neq0$ or global 
vector fields defined on the whole manifold be they incomplete or complete (in the last case this is equivalent to
consideration of the group of automorphisms). The statement holds true in all specifications.

\medskip
%\section*{Acknowledgements}

\textsc{Acknowledgement.}
I am grateful to Alexander Isaev for generous introduction to the literature 
and many inspiring discussions. I also thank Dennis The for useful comments.

%%%%%%%%%%%%%%%%%%%%%%%%%%%%%%%%%%%%%%%%%%%%%%%%%%%%%%%%%%%%%%%%%%%%%%%%%%%%
%1%
\section{Proof: Levi-indefinite case}\label{S1}

Let us begin by recalling that Levi-nondegenerate CR-structures of hypersurface type of CR-dimension $n$
(equivalently: $\dim M=2n+1$) are parabolic geometries of type $A_{n+1}/P_{1,n+1}$, see \cite{CS}.
Here we shall take instead of $A_{n+1}$ the real Lie group $SU(k+1,n-k+1)$ if the signature of the Levi form
is $(2k,2n-2k)$ (we assume $k\le n/2$). 

The Satake diagram of its Lie algebra $\mathfrak{su}(k+1,n-k+1)$ has $k+1$ arrows, $n-2k-1$ black nodes and
$2(k+1)$ white nodes\footnote{Both symmetric with respect to the Dynkin diagram automorphism (reflection), 
and with black nodes in the middle.} for $k<n/2$: 
 \[
{
 \begin{tiny}
 \begin{tikzpicture}[scale=0.8,baseline=-3pt]
\bond{0,0}; \bond{1,0}; \bond{2,0}; \bond{3,0}; \bond{4,0};
\DDnode{x}{0,0}{};  \DDnode{w}{1,0}{}; \DDnode{b}{2,0}{}; \DDnode{b}{3,0}{}; \DDnode{w}{4,0}{}; \DDnode{x}{5,0}{};
\node (B) at (1,0.1) {}; \node (C) at (4,0.1) {}; \path[<->,font=\scriptsize,>=angle 90] (B) edge [bend left] (C);
\node (A) at (0,0.1) {}; \node (D) at (5,0.1) {}; \path[<->,font=\scriptsize,>=angle 90] (A) edge [bend left] (D);
 \useasboundingbox (-.4,-.2) rectangle (5.2,0.55); % make bounding box bigger
 \end{tikzpicture}
 \end{tiny}
 }
 \]
and for $n=2k$ it has $k$ arrows and all $(2k+1)$ nodes white. We indicate by crosses the parabolic subalgebra.

For (regular normal) parabolic geometries a universal upper bound $\fU$ on dimension of the Lie algebra of symmetries 
of non-flat structures was derived in \cite{KT}: $\fS\leq\fU$. This works for all real smooth geometries, 
but sharpness is guaranteed (by abstract realization) only for complex analytic and real smooth structures of split type
(few exceptions to this were classified).
The number $\fU$ was computed for all non-rigid geometries (where non-flat structures exist) of type $G/P$,
where $G$ is simple and $P$ parabolic. 

The split-real case of CR-geometry (Cartan type $A_{n+1}/P_{1,n+1}$) is the geometry of Lagrangian 
contact structures \cite{CS}. The universal upper bound for the latter is $\fU=n^2+4$ ($n\ge2$), see \cite{KT} and the Appendix. 
Thus to prove our result for $\fS$ in the Levi-indefinite case we have to exhibit a model with that many symmetries.

 \begin{lem}
The CR-structure given as the real hypersurface in $\C^{n+1}$
 \begin{equation}\label{Lind}
\op{Im}(z_{n+1})=\op{Im}(z_1\bar{z}_2)+|z_1|^4+\sum_{k=3}^n\epsilon_k|z_k|^2
 \end{equation}
($\epsilon_k=\pm1$) has exactly $n^2+4$ independent symmetries ($n>1$).
 \end{lem}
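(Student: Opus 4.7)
The plan is to exhibit an explicit $(n^2+4)$-dimensional Lie algebra of holomorphic vector fields tangent to $M$ and to combine this with the universal upper bound $\fU=n^2+4$ recalled earlier, which applies here because the quartic $|z_1|^4$ manifestly obstructs CR-equivalence of $M$ to a hyperquadric (the CR-curvature at the origin is nonzero). Producing $n^2+4$ linearly independent symmetries then yields \emph{exactly} $n^2+4$.

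The central structural fact is that the defining function $F=\op{Im}(z_{n+1})-\op{Im}(z_1\bar z_2)-|z_1|^4-\sum_{k=3}^n\epsilon_k|z_k|^2$ is weighted-homogeneous of weight $4$ under the nonstandard grading assigning weights $(1,3,2,\dots,2,4)$ to $(z_1,z_2,z_3,\dots,z_n,z_{n+1})$. Hence the weighted Euler field $E=z_1\p_{z_1}+3z_2\p_{z_2}+2\sum_{k\ge3}z_k\p_{z_k}+4z_{n+1}\p_{z_{n+1}}$ satisfies $EF=4F$, so $E+\bar E$ is a CR-symmetry, and the full algebra $\mathfrak{aut}(M)$ splits into weight-homogeneous components $\mathfrak{aut}_w$. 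For each $w$ one writes the most general weight-$w$ polynomial holomorphic vector field $X$ (a finite-dimensional space of polynomial ansatze) and expands the tangency condition $(X+\bar X)F|_M=0$ into a linear system obtained by equating to zero the coefficients of independent $z^\alpha\bar z^\beta$ monomials.

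Carrying this out weight by weight yields $\dim\mathfrak{aut}_{-4}=1$ (Reeb translation $\p_{z_{n+1}}$); $\dim\mathfrak{aut}_{-3}=2$ (Heisenberg translation $a\,\p_{z_2}+\bar a\,z_1\p_{z_{n+1}}$); $\dim\mathfrak{aut}_{-2}=2n-3$ (Heisenberg translations $a_k\p_{z_k}+2i\epsilon_k\bar a_k z_k\p_{z_{n+1}}$ for $k\ge3$, together with the single real direction $z_1\p_{z_2}$); $\dim\mathfrak{aut}_{-1}=2n-2$ (a distorted $\p_{z_1}$-family described below together with the directions $z_1\p_{z_k}$, $k\ge3$); and $\dim\mathfrak{aut}_0=(n-2)^2+2$ (the Euler field, the diagonal $(z_1,z_2)$-phase rotation, and the $\mathfrak{u}(p,q)$-action on $(z_3,\dots,z_n)$ preserving the reduced Levi form $\sum\epsilon_k|z_k|^2$). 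The total is $1+2+(2n-3)+(2n-2)+(n-2)^2+2=n^2+4$; since this already saturates the universal upper bound, the positive-weight components $\mathfrak{aut}_{\ge1}$ must vanish automatically, and the proof is complete.

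The main obstacle lies in the weight $-1$ analysis. A naive translation $A\,\p_{z_1}$ is \emph{not} a CR-symmetry: the summand $-2A\,z_1\bar z_1^2$ in $\p_{z_1}F$ produces monomials $z_1\bar z_1^2$ and $z_1^2\bar z_1$ whose mixed $(z,\bar z)$-type prevents any holomorphic $\p_{z_{n+1}}$-correction from absorbing them. These obstructive monomials are cancelled only by adjoining the precise term $4i\bar A\,z_1^2\p_{z_2}$ (since $z_1^2\p_{z_2}F=z_1^2\bar z_1/(2i)$ supplies exactly the required monomial); matching the residual $z_2$-linear coefficients of $(X+\bar X)F$ then forces the further correction $-\bar A\,z_2\p_{z_{n+1}}$, so that $X=A\p_{z_1}+4i\bar A\,z_1^2\p_{z_2}-\bar A\,z_2\p_{z_{n+1}}$ is a CR-symmetry for every $A\in\C$. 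An analogous but simpler cascade at weight $0$ ties $\op{Re}(C_{22})=3\op{Re}(C_{11})$ and $\op{Re}(C_{kk})=2\op{Re}(C_{11})$ through the $|z_1|^4$, $|z_k|^2$ and $\op{Im}(z_1\bar z_2)$ coefficients, leaving exactly the $(n-2)^2+2$ free parameters claimed.
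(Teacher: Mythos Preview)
Your argument is correct and follows the same strategy as the paper: exhibit $n^2+4$ explicit holomorphic symmetries and invoke the universal bound $\fU=n^2+4$ (valid since $\kappa\neq0$) to conclude exactness. The paper merely lists the vector fields $H_i,T_j,T_j',S_k,S_k',R_{st},R_{st}'$ and leaves verification to the reader; your weighted grading with weights $(1,3,2,\dots,2,4)$ is a helpful organizing device that explains how one discovers this list, and your weight-by-weight dimensions $1,2,2n-3,2n-2,(n-2)^2+2$ reproduce exactly the paper's count. One small imprecision worth fixing: at weight $-1$ the ``directions $z_1\partial_{z_k}$'' ($k\ge3$) are not symmetries by themselves---the tangency condition couples them to $z_k\partial_{z_2}$ (these are the paper's $S_k=2z_k\partial_{z_2}+i\epsilon_k z_1\partial_{z_k}$ and $S_k'$), just as your own cascade analysis forced the $4i\bar A z_1^2\partial_{z_2}$ correction onto $A\partial_{z_1}$.
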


 \begin{proof}
We express exterior symmetries as holomorphic vector fields:
 \begin{gather*}
H_1=z_1\,\p_{z_1}+3z_2\,\p_{z_2}+2\sum_{k=3}^nz_k\,\p_{z_k}+4z_{n+1}\,\p_{z_{n+1}},\\
H_2=i\,(z_1\,\p_{z_1}+z_2\,\p_{z_2}),\
H_k=i\,z_k\,\p_{z_k},\\
T_1=\p_{z_1}-z_2\,\p_{z_{n+1}}+4i\,z_1^2\,\p_{z_2},\
T_1'=i\,\p_{z_1}+i\,z_2\,\p_{z_{n+1}}+4\,z_1^2\,\p_{z_2},\\
T_2=\p_{z_2}+z_1\,\p_{z_{n+1}},\
T_2'=i\,\p_{z_2}-i\,z_1\,\p_{z_{n+1}},\\
T_k=\p_{z_k}+2i\epsilon_k\,z_k\,\p_{z_{n+1}},\
T_k'=i\,\p_{z_k}+2\epsilon_k\,z_k\,\p_{z_{n+1}},\\
T_{n+1}=\p_{z_{n+1}},\
S_1=z_1\,\p_{z_2},\\
S_k=2z_k\,\p_{z_2}+i\epsilon_k\,z_1\,\p_{z_k},\
S_k'=2i\,z_k\,\p_{z_2}+\epsilon_k\,z_1\,\p_{z_k},\\
R_{st}=\epsilon_sz_s\,\p_{z_t}-\epsilon_tz_t\,\p_{z_s},\
R_{st}'=i\,(\epsilon_sz_s\,\p_{z_t}+\epsilon_tz_t\,\p_{z_s}).
 \end{gather*}
The indices run as follows: $3\le k\le n$, $3\le s<t\le n$ (if the range is empty the corresponding terms
do not appear; notice also that $S_2$ does not appear). It is easy to check that these are 
symmetries\footnote{A holomorphic vector field $V$ is tangent to the real hypersurface $F=0$ if 
the following condition hold: $(V+\bar{V})\cdot F|_{F=0}=0$.},
are linearly independent and the totality of them is as required. 
Since the CR-curvature $\kappa$ is not zero, there can be no more symmetries.
 \end{proof}
 
 \begin{rk}
The model in the Lemma for $n=2$  is almost identical to that of \cite[p.146]{W}. It was shown
there that the symmetry is an 8-dimensional solvable Lie algebra. For $n>2$ the situation is different.
The Levi-decomposition of the algebra is
 $$
\op{sym}(\ref{Lind})=\mathfrak{su}(p,q)\ltimes\mathfrak{r},
 $$
where $p+q=n-2$, the signature is $(\epsilon_3,\dots,\epsilon_n)$, $\dim\mathfrak{r}=4n+1$.
The Levi factor is generated by $H_k-H_3$ ($3<k\le n$), $R_{st},R_{st}'$ ($3\le s<t\le n$),
and the radical $\mathfrak{r}$ is generated by $H_1,H_2,H_3+\dots+H_n$, $T_1,\dots,T_{n+1}$, $T_1',\dots,T_n'$, 
$S_1$, $S_3,\dots,S_n$, $S_3',\dots,S_n'$.
 \end{rk}

CR-structure (\ref{Lind}) is Levi-indefinite and can attain any non-positive (and non-negative) signature.
Thus we realized the upper bound on the symmetry and the theorem in this case is proven.

%%%%%%%%%%%%%%%%%%%%%%%%%%%%%%%%%%%%%%%%%%%%%%%%%%%%%%%%%%%%%%%%%%%%%%%%%%%%
%2%
\section{Proof: Levi-definite case}\label{S2}

Let us show that the upper bound $\fS\le\fU=n^2+4$ is not realizable for strictly pseudo-convex 
(or Levi-definite) CR-structures. Recall from \cite{Kr,KT} that the symmetry algebra acting on $M$
is filtered by a choice of a point on $M$, and the corresponding graded algebra $\fa\subset\g$. 

Recall \cite{CS} that in any parabolic geometry the curvatrue $\kappa$ of the normal Cartan connection has
a harmonic part $\kappa_H$ that uniquely determines flatness: $\kappa\equiv0$ $\Leftrightarrow$ $\kappa_H\equiv0$.
The harmonic curvature takes values in the cohomology space $H^2_+(\g_{-},\g)$, which it is a $\g_0$-module.

In our case, if $\kappa_H\neq0$ at the given point, then
$\fa=\fa_{-2}\oplus\fa_{-1}\oplus\fa_0\subset\mathfrak{su}(1,n+1)$,
where the latter algebra is $|2|$-graded (contact grading) in accordance with the choice of parabolic 
subalgebra $\mathfrak{p}_{1,n+1}\subset\g$. Absence of positive grading spaces in $\fa$ is due to
the prolongation rigidity phenomenon, see the Appendix.

In particular, the grading 0 space\footnote{The semi-simple part of $\g_0$ corresponds to the Dynkin diagram 
for $\g$ with crosses removed; the center has dimension of the number of crosses.} $\fa_0\subset\g_0=\mathfrak{u}(n)\oplus\R$ 
($\R$ is generated by the grading element), and as in \cite{KT} we observe 
that $\fa_0\subset\frak{ann}_{\g_0}(\kappa_H)\subset\g_0$. 

The largest dimension of the annihilator is obtained in the complex case, when the harmonic curvature $\kappa_H$ 
is given by  the lowest weight vector $\phi$ of the corresponding representation. In \cite{KT} we computed that 
for $A_{n+1}/P_{1,n+1}$ in the complex case $\dim\fa_0\le n^2-2n+3$.
 
 \begin{prop}
For Levi-definite CR-structures at the point with $\kappa_H\neq0$ the lowest weight vector $\phi$ is not real and 
$\dim\fa_0\le n^2-2n+2$.
 \end{prop}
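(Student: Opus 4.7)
The plan is to exploit compactness of $\mathfrak{u}(n)\subset\g_0$ in the Levi-definite case to remove the contribution of the nilpotent radical of the stabilizer that is present in the complex theory.

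Recall from the discussion above and \cite{KT} that in the complex case, the bound $\dim\fa_0\le n^2-2n+3$ is attained by $\kappa_H$ equal to a lowest weight vector $\phi$ of the irreducible $\g_0^{\C}$-module $V=H^2_+(\g_-,\g)\otimes\C$. The stabilizer of the line $[\phi]$ in $\g_0^{\C}$ is a proper parabolic $\mathfrak{p}_0=\mathfrak{l}_0\oplus\mathfrak{n}_0$ with $\mathfrak{n}_0\neq 0$, since the weight $\lambda$ of $\phi$ is singular for at least one simple root of $\mathfrak{gl}(n,\C)\subset\g_0^{\C}$. The annihilator of $\phi$ itself is then $\mathrm{ann}_{\g_0^{\C}}(\phi)=(\mathfrak{l}_0\cap\ker\lambda)\oplus\mathfrak{n}_0$, of complex dim $n^2-2n+3$.

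In the Levi-definite case, $\g_0=\mathfrak{u}(n)\oplus\R$ is reductive with compact semisimple part $\mathfrak{su}(n)$, so it contains no non-zero ad-nilpotent elements. For any real $\kappa_H$, the complex annihilator $\mathrm{ann}_{\g_0^{\C}}(\kappa_H)$ is $\sigma$-stable, but its nilpotent radical has no real form in $\g_0$. Hence $\mathrm{ann}_{\g_0}(\kappa_H)=\mathrm{ann}_{\g_0^{\C}}(\kappa_H)\cap\g_0$ is contained in the reductive part of the complex annihilator. When $\kappa_H$ lies in the closed $\g_0^{\C}$-orbit of $\phi$, this gives real dim at most $\dim_{\C}(\mathfrak{l}_0\cap\ker\lambda)=n^2-2n+2$; for $\kappa_H$ in any other orbit, $\dim_{\C}\mathrm{ann}_{\g_0^{\C}}(\kappa_H)\le n^2-2n+2$ already by uniqueness of the closed orbit as the realizer of the complex bound, again bounding the real annihilator. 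In all cases $\dim\fa_0\le n^2-2n+2$.

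As a by-product, $\phi$ cannot itself be real: otherwise $\mathfrak{n}_0$ would descend to a non-trivial nilpotent in $\g_0$, contradicting compactness. The main technical point is verifying $\mathfrak{n}_0\neq 0$ for the Chern-Moser module; this follows from Kostant's theorem applied to $H^2(\g_-,\g)$ over $A_{n+1}/P_{1,n+1}$, where the lowest weight, having bi-$(2,2)$-type form in the $\g_{-1}$-decomposition, pairs non-trivially with (at least) the simple coroots of $\mathfrak{gl}(n,\C)$ adjacent to the crossed nodes, as one sees by an explicit computation of $\langle\lambda,\alpha^\vee\rangle$.
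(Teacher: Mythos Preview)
Your argument is correct and is essentially a more explicit rendition of the paper's first proof. Both hinge on the compactness of $(\g_0)_{\mathrm{ss}}\simeq\mathfrak{su}(n)$ in the Levi-definite case: the paper simply invokes the fact that a compact real form admits no real extremal weight vector in a nontrivial representation, whereas you unpack the mechanism by observing that the complex annihilator of such a vector has a nonzero nilradical, which---being a characteristic ideal---would have to be $\sigma$-stable and hence contribute nonzero ad-nilpotent elements to $\g_0$, contradicting compactness. In effect your ``Case~1'' is vacuous (it shows that no real $\kappa_H$ lies in the minimal orbit at all, rather than merely bounding the annihilator there), and the whole weight of the argument rests on your Case~2 together with the uniqueness of the minimal orbit as the complex maximizer, exactly as in the paper.

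Two minor points of phrasing: your use of ``singular'' for the lowest weight is inverted---for $\mathfrak{n}_0\neq0$ you need $\langle\lambda,\alpha^\vee\rangle\neq0$ for some simple root of $\mathfrak{gl}(n,\C)$, which is what you correctly verify in your final paragraph; and the ``$=$'' in $\dim_\C(\mathfrak{l}_0\cap\ker\lambda)=n^2-2n+2$ should in general be ``$\le$'' (indeed $\dim\mathfrak{n}_0>1$ here, as two simple coroots pair nontrivially with the weight), though this only strengthens your bound. The paper also supplies a second, independent proof via the Isaev--Krantz classification of subalgebras of $\mathfrak{u}(n)$ of dimension $\ge n^2-2n+3$, which is an entirely different route you may wish to be aware of.
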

 
 \begin{proof}
We give two independent proofs. At first let us notice that minus the lowest weight of the complexification
$H^2_+(\g_{-},\g)^\C= H^2_+(\g_{-}^\C,\g^\C)$ being computed via the Kostant algorithm 
(for this we refer to \cite{CS}) is as shown on the following Satake diagram 

 \[
{
 \begin{tiny}
 \begin{tikzpicture}[scale=0.8,baseline=-3pt]
\bond{0,0}; \bond{1,0}; \bond{2,0}; \bond{3,0}; \bond{4,0};
\DDnode{x}{0,0}{-3\,};  \DDnode{b}{1,0}{2}; \DDnode{b}{2,0}{0}; \DDnode{b}{3,0}{0}; \DDnode{b}{4,0}{2}; \DDnode{x}{5,0}{\,-3};
\node (B) at (0,0.1) {}; \node (E) at (5,0.1) {}; \path[<->,font=\scriptsize,>=angle 90] (B) edge [bend left] (E);
 \useasboundingbox (-.4,-.2) rectangle (5.2,0.55); % make bounding box bigger
 \end{tikzpicture}
 \end{tiny}
 }
 \]

We only need to notice that the lowest weight of the semi-simple part is non-trivial. The Satake diagram of the semisimple part of $\g_0$ 
is obtained by removing the crossed nodes: $(\g_0)_\text{ss}\simeq\mathfrak{su}(n)$. Since this is a compact Lie algebra, its real (nontrivial)
representation cannot have real lowest weight vector, and thus the orbit of minimal dimension (as in the complex case) is not realizable.
Equivalently, the annihilator of highest possible dimension is not possible for this real form, and so the dimension of $\fa_0$ drops at least by one.

The other proof is based on the fact, proven in \cite[Lemma 2.1]{IK} (we formulate a local version, which follows the same proof), 
that if a subalgebra $\tilde\fa_0\subset\mathfrak{u}(n)$ has dimension $\ge n^2-2n+3$, then it is either $\mathfrak{u}(n)$,
or $\mathfrak{su}(n)$, or $n=4$ and $\tilde\fa_0=\mathfrak{sp}(2)\oplus\R\subset\mathfrak{u}(4)$. 
Notice that the grading element $s$ cannot belong to $\fa_0$
(otherwise the symmetry algebra is graded and the curvature $\kappa_H$ vanishes at the given point \cite{KT}), so this algebra is isomorphic
to a subalgebra $\tilde\fa_0\subset\g_0/\R s=\mathfrak{u}(n)$. 

The first two of the above cases lead to $\kappa_H=0$ at the given point, as they have too big dimensions (by complex results of \cite{KT};
alternatively: the point has to be umbilic).
The last possibility for $n=4$ is not realized as $\mathfrak{sp}(2)$ does not preserve any nonzero vector in the curvature representation 
$\Lambda^2\g_{-1}^*\ot\g_0$ (this is directly checked in Maple\footnote{I am grateful to Henrik Winther for an assistance with this.}), 
which would again imply $\kappa_H=0$ contrary to the assumption.
 \end{proof}

Thus in Levi-definite case we improved the universal bound to $\fU\le n^2+3$, and to prove our result for $\fS$ 
we have to exhibit a model with that many symmetries.

 \begin{lem}
The CR-structure given as the real hypersurface in $\C^{n+1}$
 \begin{equation}\label{Ldef}
\op{Im}(z_{n+1})=\log(1+|z_1|^2)+\sum_{k=2}^n|z_k|^2
 \end{equation}
has exactly $n^2+3$ independent symmetries ($n>1$).
 \end{lem}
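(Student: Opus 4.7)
The plan mirrors that of Lemma 1: I would exhibit $n^2+3$ linearly independent holomorphic vector fields tangent to the hypersurface \eqref{Ldef}; the bound $\fS\le\fU\le n^2+3$ proved in this section via the Proposition will then imply that no further symmetries can exist. Tangency of $V=\sum a_j(z)\p_{z_j}$ amounts to $(V+\bar V)\cdot F|_{F=0}=0$, where $F=\op{Im}(z_{n+1})-\log(1+|z_1|^2)-\sum_{k=2}^n|z_k|^2$.

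First I would write down the easy symmetries: the unitary Lie algebra $\mathfrak{u}(n-1)$ acting in the standard way on the block $(z_2,\dots,z_n)$ gives $(n-1)^2$ independent fields, to which I would add the phase rotation $H_1=iz_1\p_{z_1}$, the Heisenberg-type translations $T_k=\p_{z_k}+2iz_k\p_{z_{n+1}}$ and $T_k'=i\p_{z_k}+2z_k\p_{z_{n+1}}$ for $k=2,\dots,n$, and $T_{n+1}=\p_{z_{n+1}}$. Tangency of each is a one-line computation; altogether they account for $(n-1)^2+1+2(n-1)+1=n^2+1$ symmetries.

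The main obstacle is locating the two remaining generators. The log-potential $\log(1+|z_1|^2)$ destroys translation symmetry in $z_1$ but retains an infinitesimal $SU(2)$-action coming from the Fubini--Study geometry of $\C P^1$, provided the pluriharmonic shift of the potential is absorbed by a linear compensation in the $z_{n+1}$-direction. The key identity is
\[
(1+z_1^2)\,\frac{\bar z_1}{1+|z_1|^2}+(1+\bar z_1^2)\,\frac{z_1}{1+|z_1|^2}=z_1+\bar z_1,
\]
together with its $i$-rotated analogue; these suggest the candidates
\[
V_M=(1+z_1^2)\p_{z_1}+2iz_1\p_{z_{n+1}},\qquad V_M'=i(1-z_1^2)\p_{z_1}+2z_1\p_{z_{n+1}},
\]
and a direct check shows $(V_M+\bar V_M)\cdot F\equiv 0\equiv(V_M'+\bar V_M')\cdot F$ as global holomorphic identities, not merely on $\{F=0\}$. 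Everything else is bookkeeping.

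Linear independence of the resulting $n^2+3$ fields is clear from the distinct polynomial shapes of their coefficients in $z_1,\dots,z_{n+1}$. Since $n^2+3<(n+2)^2-1$, the hypersurface \eqref{Ldef} is not flat, so $\kappa_H\not\equiv 0$ on $M$; at any point where $\kappa_H\ne 0$ the Proposition yields $\fS\le n^2+3$, matching the explicit construction and completing the proof.
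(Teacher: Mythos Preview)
Your approach is the paper's own: the vector fields you list are exactly those in the paper (your $V_M,V_M'$ are the paper's $T_1,T_1'$, and your $\mathfrak{u}(n-1)$-block together with $H_1$ unpacks to the paper's $H_k,R_{st},R_{st}'$), and the Fubini--Study explanation for the two ``hidden'' symmetries is a nice piece of motivation the paper omits.

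There is, however, a logical circularity in your last paragraph. From ``$n^2+3<(n+2)^2-1$'' you conclude that the hypersurface is not flat, but at that stage you have only shown the symmetry algebra has dimension \emph{at least} $n^2+3$; nothing yet excludes the possibility that it is flat and has the full $(n+2)^2-1$ symmetries. The paper instead treats $\kappa\neq0$ as an independent input (``Since the CR-curvature $\kappa$ is not zero\dots''). The honest fix is a direct check: expand $\log(1+|z_1|^2)=|z_1|^2-\tfrac12|z_1|^4+\cdots$ at the origin and note that for $n>1$ the bidegree-$(2,2)$ term $-\tfrac12|z_1|^4$ has nonzero trace-free part with respect to the Levi form $\sum_{k=1}^n|z_k|^2$, so the Chern--Moser tensor does not vanish there. (For $n=1$ that trace-free part \emph{is} zero and the surface is spherical --- indeed your four fields $V_M,V_M',H_1,T_{n+1}$ already exceed Cartan's submaximal bound $3$ --- which shows both that the hypothesis $n>1$ is essential and that non-flatness genuinely requires its own argument.) With this correction your proof is complete and coincides with the paper's.
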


 \begin{proof}
We express exterior symmetries as holomorphic vector fields:
 \begin{gather*}
T_1=(1+z_1^2)\,\p_{z_1}+2i\,z_1\,\p_{z_{n+1}},\
T_1'=i\,(1-z_1^2)\,\p_{z_1}+2z_1\,\p_{z_{n+1}},\\
T_j=\p_{z_k}+2i\,z_k\,\p_{z_{n+1}},\
T_j'=i\,\p_{z_k}+2z_k\,\p_{z_{n+1}},\\
T_{n+1}=\p_{z_{n+1}},\
H_k=i\,z_k\,\p_{z_k}\\
R_{st}=z_s\,\p_{z_t}-z_t\,\p_{z_s},\
R_{st}'=i\,(z_s\,\p_{z_t}+z_t\,\p_{z_s}).
 \end{gather*}
The indices run as follows: $1\le k\le n$, $1<j\le n$, $2\le s<t\le n$ (if the range is empty 
the corresponding terms do not appear). It is easy to check that these are
linearly independent symmetries and the totality of them is as required.
Since the CR-curvature $\kappa$ is not zero, there can be no more symmetries.
 \end{proof}

 \begin{rk}
The model in the Lemma for $n=2$ is the last case of 4 submaximally symmetric CR-structures in 5D from 
\cite[Theorem~1]{L2}. In this case the Levi decomposition is the direct sum $\mathfrak{su}(2)\oplus\mathfrak{r}$,
where the radical is a 4D solvable Lie algebra with derived series of length 3. In the general case another simple factor
appears and the Levi-decomposition of the symmetry algebra is
 $$
\op{sym}(\ref{Ldef})=(\mathfrak{su}(2)\oplus\mathfrak{su}(n-1))\ltimes\mathfrak{r},
 $$
where $\dim\mathfrak{r}=2n$.
The Levi factor is generated by $T_1,T_1',T_{n+1}-H_1$ (the first simple factor) and 
$H_3-H_2,\dots,H_n-H_2$, $R_{st},R_{st}'$ (the second simple factor),
and the radical $\mathfrak{r}$ is generated by $H_2+\dots+H_n$, $T_2,T_2',\dots,T_n,T_n',T_{n+1}$.
 \end{rk}

CR-structure (\ref{Ldef}) is Levi-definite.
Thus we realized the upper bound on the symmetry and the theorem in this case is proven.

%%%%%%%%%%%%%%%%%%%%%%%%%%%%%%%%%%%%%%%%%%%%%%%%%%%%%%%%%%%%%%%%%%%%%%%%%%%%
%3%
\section{Concluding remarks}\label{S3}

In the above setup we considered integrable CR-structures (torsion-free parabolic geometries).
The statement of the theorem is also true for the general class of  partially integrable CR-structures (torsion allowed;
these are non-embeddable into $\C^{n+1}$ even in the analytic case) -- still the submaximal dimension $\fS$,
i.e.\ the maximal dimension of the symmetry algebra of a non-flat CR-structure, is given by the same value. 

Indeed, by \cite[Theorem 4.1.8]{KT} the value of $\fS$ does not exceed $\max_w\fU_w$, where $w$ belongs to
the Hasse diagram of weight two \cite{CS} enumerating the (lowest weights of)
irreducible cohomolgy components of $H^2_+(\g_{-},\g)$. 
For CR-geometry $A_\ell/P_{1,\ell}$, $\ell=n+1$, there are two irreducible components: 
$w_2=(1,\ell)$, corresponding to curvature that we considered, 
and $w_1=(1,2)\cup(\ell,\ell-1)$, corresponding to torsion. But the value $\fS_w=\fU_w$ is the same for $w_1$ and $w_2$, see  
\cite{KT} Table 11 (parabolic contact geometries). 

It is even true that the result (that the dimension of the symmetry of a non-flat structure does not exceed $\fS$) holds true for the general 
(not partially integrable) case, when one drops the condition $\bar\omega(JX,JY)=\bar\omega(X,Y)$, but we will not explore this here.

%%%%%%%%%%%%%%%%%%%%%%%%%%%%%%%%%%%%%%%%%%%%%%%%%%%%%%%%%%%%%%%%%%%%%%%%%%%%
%A%
\appendix
\section{Universal bound for the real structures}\label{A}

The symmetry algebra of any parabolic geometry of type $G/P$ is filtered (filtration depends on the choice of a point) 
and the corresponding graded algebra embeds $\fa\subset\g$. The universal upper bound of \cite{KT} for the symmetry algebra is as follows:
 $$
\fU\le\max\{\dim\fa^\phi\,:\,0\neq\phi\in H^2_+(\g_{-},\g)\},
 $$
where $\fa=\fa_{-}\oplus\fa_0^\phi\oplus\fa_1^\phi\oplus\dots$, $\fa_{-}=\g_{-}$, $\fa_0^\phi=\mathfrak{ann}(\phi)\subset\g_0$
and $\fa_i^\phi$ are $i$-th Tanaka prolongations of $(\g_{-},\fa_0^\phi)$. 

Moreover for complex or split-real cases we proved in \cite{KT} that the lowest dimension bound for $\fa^\phi$ 
is achieved if and only if $0\neq\phi\in H^2_+(\g_{-},\g)$ is the lowest\footnote{or highest, but we keep the lowest weight convention.} 
weight vector in a $\g_0$-submodule of $H^2_+$ (there were several exceptions for equality (sharpness), but the inequality from above holds always). 
This is true for other real cases.

 \begin{prop}
For any real parabolic geometry the submaximal symmetry dimension is bounded so:
 $$
\fU\le\max_{\mu}\dim_\C(\fa^{\phi_\mu})^{\C},
 $$
where $\phi_\mu$ is the lowest weight vector of the irreducible submodule $\mathbb{V}_\mu$ of minus lowest weight $\mu$ in the complexified 
$\g_0^\C$-module $H^2_+(\g_{-}^\C,\g^\C)$.
 \end{prop}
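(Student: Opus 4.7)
The plan is to complexify and reduce to the complex (equivalently, split-real) case already established in \cite{KT}. Given any $0\neq\phi\in H^2_+(\g_{-},\g)$, my goal is to produce some $\mu$ with $\dim_\R\fa^\phi\le\dim_\C(\fa^{\phi_\mu})^\C$; taking the maximum over $\phi$ then yields the proposition. The passage to the complex side is legitimate because $\dim_\R\fa^\phi=\dim_\C(\fa^\phi)^\C$, and both ingredients entering $\fa^\phi$ -- taking the annihilator in $\g_0$ and iterating Tanaka prolongation -- commute with $\otimes_\R\C$. Thus $(\fa^\phi)^\C$ is the graded Tanaka prolongation of $(\g_{-}^\C,\mathfrak{ann}_{\g_0^\C}(\phi))$, with $\phi$ now regarded as an element of the complexified cohomology $H^2_+(\g_{-}^\C,\g^\C)=H^2_+(\g_{-},\g)\otimes_\R\C$.

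Next I would decompose $H^2_+(\g_{-}^\C,\g^\C)=\bigoplus_\mu\mathbb{V}_\mu$ into irreducible $\g_0^\C$-submodules and expand $\phi=\sum_\mu\phi'_\mu$ accordingly. Since $\phi\neq0$, at least one component $\phi'_{\mu_0}\in\mathbb{V}_{\mu_0}$ is nonzero, and intersection of annihilators gives
$$\mathfrak{ann}_{\g_0^\C}(\phi)=\bigcap_\mu\mathfrak{ann}_{\g_0^\C}(\phi'_\mu)\subset\mathfrak{ann}_{\g_0^\C}(\phi'_{\mu_0}).$$
For the single irreducible module $\mathbb{V}_{\mu_0}$ I would invoke the standard closed-orbit principle: the unique closed $G_0^\C$-orbit in $\mathbb{V}_{\mu_0}\setminus\{0\}$ is that of a lowest weight vector, which therefore has minimum-dimensional orbit and hence maximum-dimensional stabilizer; thus $\dim\mathfrak{ann}_{\g_0^\C}(\phi'_{\mu_0})\le\dim\mathfrak{ann}_{\g_0^\C}(\phi_{\mu_0})$. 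Combining this with monotonicity of Tanaka prolongation in its $\g_0$-slot -- the defining condition $[X,\g_{-1}]\subset\fa_0\oplus\g_{-1}$ becomes weaker when $\fa_0$ is enlarged, and the same holds inductively at each higher degree -- yields the chain $\dim_\C(\fa^\phi)^\C\le\dim_\C(\fa^{\phi_{\mu_0}})^\C\le\max_\mu\dim_\C(\fa^{\phi_\mu})^\C$, as desired.

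The main obstacle I anticipate is the closed-orbit step. That the lowest weight vector has maximum-dimensional annihilator in an irreducible $\g_0^\C$-module is standard from algebraic group theory, but it must be applied only after projecting onto the single complex irreducible component $\mathbb{V}_{\mu_0}$, since the original real $\phi$ need not be a weight vector for any natural real form of $\g_0$. This projection is precisely the reason the right-hand side of the claimed bound involves $\dim_\C(\fa^{\phi_\mu})^\C$ rather than the real dimension of the annihilator of some real element. A subsidiary verification is that the annihilator, closed-orbit, and prolongation steps compose correctly in each positive degree, which reduces to the functoriality of each operation under inclusions of $\g_0^\C$-subalgebras and is routine.
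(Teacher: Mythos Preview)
Your proposal is correct and follows essentially the same route as the paper: complexify, use that $\dim_\R\fa_0=\dim_\C(\fa_0)^\C$ together with commutation of the Tanaka prolongation with $\otimes_\R\C$, and then reduce to the complex statement. The only difference is cosmetic: the paper cites \cite{KT} for the complex inequality, whereas you have unpacked that citation by spelling out the decomposition into irreducibles, the closed-orbit/lowest-weight maximality of the annihilator, and the monotonicity of prolongation.
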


 \begin{proof}
Let $0\neq\kappa\in H^2_+(\g_{-},\g)$ be the curvature of the geometry. The algebra $\fa_0$ preserves it: $v\cdot\kappa=0$ $\forall v\in\fa_0$.
Let $\hat\kappa=(\kappa,0)\in H^2_-(\g_{-},\g)^\C$ be the complexification. Then the complexified algebra $(\fa_0)^\C$ preserves $\hat\kappa$.
In the complex case the inequality was proven in \cite{KT}, and the claim follows from $\dim_\R\fa_0=\dim_\C\fa_0^\C$ and commutation
of the Tanaka prolongation with complexification (because the prolongation is a linear algebra operation).
 \end{proof}

An important ingredient of the work \cite{KT} is prolongation-rigidity. A $G/P$ parabolic geometry is {\em prolongation-rigid\/} if for any
$0\neq\phi\in H^2_+(\g_{-},\g)$ the Tanaka prolongation of $\fa_0^\phi=\mathfrak{ann}(\phi)$ is $\op{pr}_+(\g_{-},\fa_0^\phi)=0$.

 \begin{prop}
If the complex geometry $G^\C/P^\C$ is prolongation-rigid, then so is the real geometry $G/P$.
 \end{prop}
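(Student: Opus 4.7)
My plan is to reduce real prolongation-rigidity to its complex counterpart by checking that every ingredient in the definition is compatible with scalar extension from $\R$ to $\C$. Fix any $0\neq\phi\in H^2_+(\g_-,\g)$ and let $\fa_0^\phi=\mathfrak{ann}_{\g_0}(\phi)$ be its annihilator. The first step is to complexify: set $\hat\phi:=\phi\otimes 1 \in H^2_+(\g_-,\g)\otimes_\R\C \cong H^2_+(\g_-^\C,\g^\C)$. Since the Chevalley--Eilenberg complex computing $H^2_+$ is made of finite-dimensional real vector spaces, scalar extension passes through cohomology and this identification is valid; in particular $\hat\phi\neq 0$.

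The main step is the annihilator identification
 $$
\mathfrak{ann}_{\g_0^\C}(\hat\phi) \;=\; (\fa_0^\phi)^\C.
 $$
The inclusion $\supset$ is clear. For $\subset$, write any element of $\g_0^\C$ as $v+iw$ with $v,w\in\g_0$; then $(v+iw)\cdot\hat\phi=(v\cdot\phi)+i(w\cdot\phi)$, and since both summands lie in the real subspace $H^2_+(\g_-,\g)$ they must vanish separately, giving $v,w\in\fa_0^\phi$.

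Having identified the annihilators, I would invoke the complex prolongation-rigidity hypothesis for $G^\C/P^\C$ applied to $\hat\phi$, yielding $\op{pr}_+(\g_-^\C,\mathfrak{ann}_{\g_0^\C}(\hat\phi))=0$. The final step is the same formal observation used in the preceding proposition: Tanaka prolongation commutes with complexification, because each graded piece $\op{pr}_k$ is cut out by an $\R$-linear system of equations on $\op{Hom}(\g_-,\op{pr}_{k-1})$, and kernels of linear maps are preserved under $-\otimes_\R\C$. Combining these gives $\op{pr}_+(\g_-,\fa_0^\phi)\otimes_\R\C = 0$, hence $\op{pr}_+(\g_-,\fa_0^\phi)=0$, which is precisely real prolongation-rigidity.

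I expect the annihilator identification to be the only nontrivial point in the argument: this is exactly where the real structure enters, and it would fail for a general complex cohomology class not of the form $\phi\otimes 1$. Once $\mathfrak{ann}_{\g_0^\C}(\hat\phi) = (\fa_0^\phi)^\C$ is in hand, the rest is a direct appeal to the complex hypothesis together with routine compatibility of linear-algebraic constructions with base change.
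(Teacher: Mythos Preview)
Your argument is correct and follows essentially the same route as the paper: complexify $\phi$ to $\hat\phi\neq0$, then use that Tanaka prolongation commutes with complexification to transfer the complex hypothesis back to the real setting. The paper's proof is terser, citing external results rather than spelling out the annihilator identification, but your explicit verification that $\mathfrak{ann}_{\g_0^\C}(\hat\phi)=(\fa_0^\phi)^\C$ fills in exactly the detail the paper leaves implicit (in fact only the easy inclusion $\supset$ is needed, since Tanaka prolongation is monotone in the degree-zero piece).
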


 \begin{proof}
This again follows from commutation of the Tanaka prolongation with complexification and the fact that any $0\neq\phi\in H^2_+(\g_{-},\g)$
generates an element $0\neq\hat\phi\in H^2_+(\g_{-}^\C,\g^\C)$ in the complexification. In fact, the statement of our proposition 
follows at once from Corollary 2.4.8 and Proposition 3.1.1 of \cite{KT}.
 \end{proof}

Notice that if the geometry has real type, that is $G$ is a real Lie algebra, then its complexification has Dynkin diagram obtained by removing 
black colour and arrows from the Satake diagram, and computation of the cohomology by the Kostant algorithm is the same
(for complex type, when complexification is given by doubling of $\g$, the situation is more complicated -- see \cite{KMT}).
The CR-geometry has real type $SU(p,n+1-p)/P_{1,n}$, and so the universal bound is the same as in the complex (or split-real) case, and 
also the prolongation rigidity follows.

%%%%%%%%%%%%%%%%%%%%%%%%%%%%%%%%%%%%%%%%%%%%%%%%%%%%%%%%%%%%%%%%%%%%%%%%%%%%

\end{document}